\newtheorem{theorem}{Theorem}[section]
\newtheorem{proposition}{Proposition}[section]
\newtheorem{corollary}{Corollary}[section]
\begin{document}

\begin{frontmatter}

\title[The DG and CG discretizations of the derivative coincide]{The discretizations of the derivative by the continuous Galerkin and the discontinuous Galerkin methods are
exactly the same}

\begin{aug}
    \author{\fnms{Bernardo} \snm{Cockburn}\thanksref{t2}\ead[label=e1]{bcockbur@umn.edu}}
    \address{School of Mathematics, University of Minnesota, MN 55455\\
             USA\\
             \printead{e1}}
             \thankstext{t2}{Bernardo Cockburn's research was supported in part by the Advanced Computational Center for Entry Systems Simulation (ACCESS) through NASA grant 80NSSC21K1117.}
\end{aug}
\received{\sday{28} \smonth{3} \syear{2023}}

\begin{abstract}
In the framework of ODEs, 
we uncover a new  link between the 
continuous Galerkin method (see Math. Comp. (1972), 26 (118 and 120), 415-426 and 881-891) and the discontinuous Galerkin method (see Mathematical Aspects of Finite elements in PDEs, (1974), 89-123), namely, that the discretizations of the derivative by these two methods are the same. A direct consequence of this result is the construction of a new elementwise post-processing of the approximate solution provided by the Discontinuous Galerkin method. 
When the DG method uses polynomials of degree $k\ge0$, the post-processing consists in adding, to the DG approximate solution, the (scaled) left-Radau polynomial of 
degree $k+1$ multiplied by the jump of the approximate solution at the left boundary of the interval. No extra computation is required. The resulting new approximation is continuous and, for  $k>0$,
converges with order $k+2$, that is, with one order more than the original discontinuous Galerkin approximation. For $k=0$, the  order remains the same.
\end{abstract}

\begin{keyword}[class=MSC]
\kwd[Primary ]{65N30, 65M60, 35L65}
\end{keyword}

\begin{keyword}
    \kwd{ODEs}
    \kwd{Discontinuous Galerkin Methods}
    \kwd{Continuous Galerkin methods}
    \kwd{Post-processing}    
    \kwd{Super-convergence}
\end{keyword}

\centerline{\small This paper is dedicated to Chi-Wang Shu in the occasion of his 65th birthday} 

\end{frontmatter}


\section{Prologue}
I met Chi-Wang in August 1986 in Minneapolis.
On a Friday afternoon, we coincided at the 
office of the Institute for Mathematics and 
its Applications of the University of 
Minnesota, where we were to spend a year as 
postdocs. We were surprised that we both had 
been working on numerical methods for 
nonlinear hyperbolic conservation laws but I 
suppose that the organizers of the I.M.A. 
86-87 program were to blame for that. Strangely enough, the 
full year 
passed without us having any academic 
interaction of consequence. However, when the time was almost over, a conversation, totally unexpectedly, led to the uncovering of the (Runge-Kutta)
Discontinuous Galerkin (DG) methods for 
nonlinear hyperbolic conservation laws (for 
the one-space dimension piecewise linear case!). The rest is 
history.

As we all know, the work on this rich  subject is still vigorously going on. And perhaps,
after almost four decades of writing papers, 
reviews, monographs, of organinzing meetings, 
and of training Ph.D. students and postdocs, this is a good 
occasion to pause to celebrate the adventure. 

\section{Introduction}
\label{sec:introduction}
Here we do that by going full circle and revisiting the original DG method for ordinary differential equations (ODEs). Let us begin by recalling the introduction of the first finite element methods for ODEs.
In 1972, Hulme \cite{HulmeI72,HulmeII72} introduced the continuous Galerkin (CG) method for solving ODEs. Two  years later, Lesaint and Raviart \cite{LesaintRaviart74} studied the application of the DG method introduced in 1973 by Reed and Hill \cite{ReedHill73} for the linear hyperbolic equations of neutron transport, to ODEs. The $hp$-version of the resulting DG method was analyzed in \cite{SchoetzauSchwabCALCOLO} and some extensions considered in \cite{DelfourHagerTrochu81}.

Although fifty years have passed, the relation between these two methods seems to have remained unexplored, to the best knowledge of the author. Did this happen, because the CG method has a continuous approximation whereas the DG method does not, or because the CG method does not display an obvious stabilization whereas the DG does, or because the CG method converges with order $2k$ at the nodes whereas the DG method converges with order $2k+1$, when both methods use polynomials of degree $k$? No one can tell with certainty. { And yet,} and remarkably enough, these two methods are closer than the above-mentioned differences seem to suggest. 

Indeed, here we show that the CG method with polynomials of degree $k+1$ and the DG method with polynomials of degree $k$ provide the {\bf very same discretization of the time derivative}. 
This surprising result allows us to introduce a
 simple elementwise post-processing of the DG approximate solution
 which is continuous, displays a significantly smaller error, and has  one additional order of accuracy when polynomials of degree $k>0$ are used. The post-processing is found by applying the technique of  transforming spaces into stabilizations recently proposed in \cite{Cockburn23} to the weak formulation defining the DG method.

The rest of the paper is organized as follows. In Section 3, we state, prove and discuss our main result.  In Section 4, we briefly  describe our ongoing and future work. We end in Section 5, with some words on the good old times and Chinese food.

\section{The main result}

{\bf The CG and DG methods}. The continuous Galerkin \cite{HulmeI72,HulmeII72} method to numerically approximate the solution of the model ODE
\begin{alignat*}{3}
\frac{d}{dt} u(t) =& f(t,u(t)) &&\quad  \forall \; t\in (0,T),
&&\qquad
u(0)=u_0
\end{alignat*}
defines its approximation as follows. { Let $0=:t_0<t_1<\dots<t_N:=T$ be a partition of the interval $[0,T]$. Then,
on} the interval $I_n:=(t_{n-1},t_n)$, the approximation $u^{{\rm CG}}_h$ is the element of the space of polynomials of degree {$k+1$, $\mathcal{P}_{k+1}(I_n)$, and $k\ge 0,$} which solves the weak formulation
\begin{alignat*}{2}
\int_{I_n}  v(t)\frac{d}{dt} u^{{\rm CG}}_h (t) \,dt =&\;
\int_{I_n}  f(t,u^{{\rm CG}}_h(t)) \,v(t)\,dt &&\quad\forall v\in \mathcal{P}_k(I_n),
\\
u^{{\rm CG}}_h(t_{n-1}^+)=&\;u^{{\rm CG}}_h(t_{n-1}^-),
\end{alignat*}
where $u^{{\rm CG}}_h(0^-):=u_0$. In our current vocabulary, this is a Petrov-Galerkin method because the test and trial spaces, $\mathcal{P}_k(I_n)$ and $\mathcal{P}_{k+1}(I_n)$, respectively, are not the same.

The DG method \cite{LesaintRaviart74,ReedHill73} defines its approximation in the following manner.
On the interval $I_n:=(t_{n-1},t_n)$, the approximation $u^{{\rm DG}}_h$ is the element of the space of polynomials of degree $k\ge0$ $\mathcal{P}_k(I_n)$ which solves, in our notation, the weak formulation
\begin{alignat*}{2}
-\int_{I_n} u^{{\rm DG}}_h (t) \frac{d}{dt} v(t)\,dt + \widehat{u}^{\,{\rm DG}}_h v |_{t_{n-1}}^{t_n}=&
\int_{I_n}  f(t,u^{{\rm DG}}_h(t)) \,v(t)\,dt &&\quad\forall v\in \mathcal{P}_k(I_n),
\\
{\widehat{u}^{\,{\rm DG}}_h(t_{n-1})}=&\;u^{{\rm DG}}_h(t_{n-1}^-),
\end{alignat*}
where $u^{{\rm DG}}_h(0^-):=u_0$. The function $\widehat{u}^{\,{\rm DG}}_h$ is nowadays usually called {\bf upwinding numerical trace}. { It is defined only at the nodal points $t_m$ but, to alleviate the notation, we
write that $\widehat{u}^{\,{\rm DG}}_h(t_m^+):=\widehat{u}^{\,{\rm DG}}_h(t_m^-):=\widehat{u}^{\,{\rm DG}}_h(t_m)$,  for $m=0,\dots,N$.}

{This completes the definition of the DG method. The DG method is what we now consider to be a genuine Galerkin method, as the test and trial spaces are the same, $\mathcal{P}_k(I_n)$.}

{\bf Scaled left-Radau polynomials and their properties}. To relate these two methods, we are going to need to introduce the so-called left-Radau polynomials. The left-Radau polynomial of degree $k+1$,
is defined as
\[
R_{k+1}(\tau) =\frac12 P_{k+1}(-1)\,P_{k+1}(\tau)
              +\frac12 P_{k}(-1)  \,P_{k}(\tau)
              \qquad\forall\;\tau\in [-1,1],
\]
where $P_\ell$ is the Legendre polynomial of degree $\ell$.
On the interval $I_n$, we define the corresponding {\em scaled} left-Radau polynomial by
\begin{alignat*}{2}
\mathcal{R}_{n,k+1}(t)&:=R_{k+1}(\tau_n(t))
&&\qquad \forall\;t\in I_n,
\\
\tau_n(t)&=(2t-(t_n+t_{n-1}))/(t_n-t_{n-1})
&&\qquad
\forall\;t\in I_n.
\end{alignat*}
Thanks to the properties of the Legendre polynomials and the scaling function $\tau_n: I_n\mapsto (-1,1)$, it is easy to see that
$\mathcal{R}_{n,k+1} $ is the only polynomial of degree $k+1$ such that
\begin{alignat*}{2}
 {\text{\rm(i)}}& \;\;\;\;\;\;\;\;\; \mathcal{R}_{n,k+1}(t_n)\;\;\;\;\;&&=0,
 \\
 {\text{\rm(ii)}}&  \;\;\;\;\;\;\;\;\;\mathcal{R}_{n,k+1}(t_{n-1})&&=1,
 \\
 {\text{\rm(iii)}}&  \;\;\int_{I_n}\mathcal{R}_{n,k+1}(t)\,w(t)\,dt&&=0 \quad\forall w\in \mathcal{P}_{k-1}(I_n).
 \end{alignat*}

 {\bf The elementwise post-processing}.
 With this left-Radau polynomial, we define the elementwise post-processing of $u_h^{{\rm DG}}$ by
\[
 u^{{\rm DG},*}_h(t):=u^{{\rm DG}}_h(t)+(\widehat{u}^{\,{\rm DG}}_h-u^{{\rm DG}}_h)(t^+_{n-1}) \; \mathcal{R}_{n,k+1}(t) \quad\forall t\in I_n.
 \]
 With this simple post-processing, we are going to be able to establish the stated relation between the CG and DG methods. Before doing that, let us gather some properties of this function in the following result.
\begin{proposition}
\label{proposition3.1}
We have that
 \begin{alignat*}{2}
 {\text{\rm(a)}}& \;\; u^{{\rm DG},*}_h \in \mathcal{P}_{k+1}(I_n),
 \\
 {\text{\rm(b)}}&  \;\; u^{{\rm DG},*}_h \in \mathcal{C}^0[0,T],
 \\
 {\text{\rm(c)}}&  \;\; u^{{\rm DG},*}_h(t_\ell)=\widehat{u}^{\,{\rm DG}}_h(t_\ell).
 \end{alignat*}
 \end{proposition}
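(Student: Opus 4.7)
The plan is to verify the three statements directly from the definition of $u^{{\rm DG},*}_h$ and the three characterizing properties (i)--(iii) of the scaled left-Radau polynomial $\mathcal{R}_{n,k+1}$ listed just above the proposition.

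For part (a), I would observe that on $I_n$ the function $u^{{\rm DG}}_h$ lies in $\mathcal{P}_k(I_n)\subset \mathcal{P}_{k+1}(I_n)$, the quantity $(\widehat{u}^{\,{\rm DG}}_h-u^{{\rm DG}}_h)(t^+_{n-1})$ is a scalar, and $\mathcal{R}_{n,k+1}\in\mathcal{P}_{k+1}(I_n)$, so the sum is in $\mathcal{P}_{k+1}(I_n)$. This is the routine piece.

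For part (c), I would use properties (i) and (ii) of $\mathcal{R}_{n,k+1}$. Evaluating $u^{{\rm DG},*}_h$ at $t_n^-$ (so from within $I_n$) gives, by (i), $u^{{\rm DG},*}_h(t_n^-)=u^{{\rm DG}}_h(t_n^-)=\widehat{u}^{\,{\rm DG}}_h(t_n)$, the last equality being the upwinding definition of the numerical trace. Evaluating $u^{{\rm DG},*}_h$ at $t_{n-1}^+$ (from within $I_n$) gives, by (ii),
\[
u^{{\rm DG},*}_h(t_{n-1}^+)=u^{{\rm DG}}_h(t_{n-1}^+)+(\widehat{u}^{\,{\rm DG}}_h-u^{{\rm DG}}_h)(t_{n-1}^+)=\widehat{u}^{\,{\rm DG}}_h(t_{n-1}).
\]
Covering the endpoint $t_0$ requires the convention $u^{{\rm DG}}_h(0^-):=u_0=\widehat{u}^{\,{\rm DG}}_h(0)$, and $t_N$ is handled by the first computation. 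Thus $u^{{\rm DG},*}_h(t_\ell)=\widehat{u}^{\,{\rm DG}}_h(t_\ell)$ for all $\ell=0,\dots,N$, regardless of the side from which the limit is taken.

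Part (b) is then an immediate corollary of the computations in (c): at each interior node $t_\ell$ (with $1\le\ell\le N-1$), both one-sided limits of $u^{{\rm DG},*}_h$ equal the common value $\widehat{u}^{\,{\rm DG}}_h(t_\ell)$, so $u^{{\rm DG},*}_h$ has no jump there and is continuous on $[0,T]$ (it is a polynomial on each subinterval by (a), hence continuous in the interior). I do not foresee a real obstacle: the whole proof is driven by the two nodal values $\mathcal{R}_{n,k+1}(t_{n-1})=1$ and $\mathcal{R}_{n,k+1}(t_n)=0$, which were designed precisely to absorb the DG jump at the left endpoint and to preserve the DG value at the right endpoint. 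Property (iii) of $\mathcal{R}_{n,k+1}$ plays no role here; it will be needed later for the identification of the CG and DG discretizations of the derivative.
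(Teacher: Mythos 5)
Your proof is correct and follows essentially the same route as the paper: part (a) from the degrees of the summands, part (c) by evaluating at $t_{n-1}^+$ and $t_n^-$ using properties (ii) and (i) of $\mathcal{R}_{n,k+1}$ together with the upwinding definition of the numerical trace, and part (b) as a consequence of (c). Your extra remarks on the endpoints $t_0$, $t_N$ and on the irrelevance of property (iii) are accurate but not needed beyond what the paper records.
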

 \begin{proof}
Property (a) follows from the fact that 
$u^{{\rm DG}}_h\in\mathcal{P}_k(I_n)$ and that  {$\mathcal{R}_{n,k+1} \in \mathcal{P}_{k+1}(I_n)$.} Property (b) follows from Property (c), and Property(c) can be proven as follows:

\small{
\begin{alignat*}{6}
&u^{{\rm DG},*}_h(t^+_{n-1})&&= u^{{\rm DG}}_h(t^+_{n-1})&&+(\widehat{u}^{\,{\rm DG}}_h-u^{{\rm DG}}_h)(t^+_{n-1}) \; \mathcal{R}_{n,k+1}(t^+_{n-1}) &&={\widehat{u}^{\,{\rm DG}}_h(t_{n-1})},\text{ by (ii),}
\\
&u^{{\rm DG},*}_h(t^-_{n})&&= u^{{\rm DG}}_h(t^-_{n})&&+(\widehat{u}^{\,{\rm DG}}_h-u^{{\rm DG}}_h)(t^+_{n-1}) \; \mathcal{R}_{n,k+1}(t^-_{n}) &&={u}^{{\rm DG}}_h(t^-_{n})= \widehat{u}^{\,{\rm DG}}_h(t_{n}),
\end{alignat*}
}

\noindent by (i). This completes the proof.
\end{proof}

{\bf The rewriting of the formulation of the DG method}.
We are now ready to state our main result.

\begin{theorem}
\label{theorem3.1}
With the notation introduced above, we have that
\begin{alignat*}{2}
\int_{I_n}v(t)\; \frac{d}{dt} u^{{\rm DG},*}_h(t)\;dt =&
\int_{I_n}  f(t,u^{{\rm DG}}_h(t)) \,v(t)\,dt &&\quad\forall v\in \mathcal{P}_k(I_n),
\\
u^{{\rm DG},*}_h(t_{n-1}^+)=&\;u^{{\rm DG},*}_h(t_{n-1}^-),
\end{alignat*}
where $u^{{\rm DG},*}_h(0^-):=u_0$.
\end{theorem}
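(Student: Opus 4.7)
The plan is to start from the left-hand side of the desired identity and reduce it to the DG weak formulation by integration by parts, leveraging the three defining properties of $\mathcal{R}_{n,k+1}$ together with Proposition \ref{proposition3.1}. Since $u^{{\rm DG},*}_h \in \mathcal{P}_{k+1}(I_n)$ and $v \in \mathcal{P}_k(I_n)$ are both smooth on $I_n$, integration by parts gives
\[
\int_{I_n} v(t)\,\frac{d}{dt}u^{{\rm DG},*}_h(t)\,dt
= -\int_{I_n} u^{{\rm DG},*}_h(t)\,\frac{d}{dt}v(t)\,dt
+ \bigl[u^{{\rm DG},*}_h\, v\bigr]_{t_{n-1}^+}^{t_n^-}.
\]
By Proposition \ref{proposition3.1}(c), the boundary values $u^{{\rm DG},*}_h(t_{n-1}^+)$ and $u^{{\rm DG},*}_h(t_n^-)$ coincide with $\widehat{u}^{\,{\rm DG}}_h(t_{n-1})$ and $\widehat{u}^{\,{\rm DG}}_h(t_n)$, respectively, so the boundary term can be rewritten as $\widehat{u}^{\,{\rm DG}}_h\, v\big|_{t_{n-1}}^{t_n}$, which is exactly the numerical-trace term appearing in the DG weak formulation.

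The next step is to substitute the definition of $u^{{\rm DG},*}_h$ into the remaining volume integral and observe that
\[
-\int_{I_n} u^{{\rm DG},*}_h(t)\,\frac{d}{dt}v(t)\,dt
= -\int_{I_n} u^{{\rm DG}}_h(t)\,\frac{d}{dt}v(t)\,dt
- (\widehat{u}^{\,{\rm DG}}_h-u^{{\rm DG}}_h)(t_{n-1}^+)\int_{I_n}\mathcal{R}_{n,k+1}(t)\,\frac{d}{dt}v(t)\,dt.
\]
Here is the key point where the design of $\mathcal{R}_{n,k+1}$ pays off: since $v\in\mathcal{P}_k(I_n)$, we have $\frac{d}{dt}v\in\mathcal{P}_{k-1}(I_n)$, and property (iii) of $\mathcal{R}_{n,k+1}$ forces the last integral to vanish. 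What remains is precisely the DG test equation, and we conclude that the left-hand side equals $\int_{I_n} f(t,u^{{\rm DG}}_h(t))\,v(t)\,dt$.

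Finally, to verify the continuity condition $u^{{\rm DG},*}_h(t_{n-1}^+)=u^{{\rm DG},*}_h(t_{n-1}^-)$, I would apply Proposition \ref{proposition3.1}(c) on both intervals $I_n$ and $I_{n-1}$: each side equals $\widehat{u}^{\,{\rm DG}}_h(t_{n-1})$. For $n=1$, the same identity combined with the convention $\widehat{u}^{\,{\rm DG}}_h(t_0)=u^{{\rm DG}}_h(t_0^-)=u_0$ delivers the initial condition $u^{{\rm DG},*}_h(0^+)=u_0=:u^{{\rm DG},*}_h(0^-)$.

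I do not anticipate a genuine obstacle: the argument is essentially an integration by parts glued to the three defining properties of the scaled left-Radau polynomial. The only thing that requires a moment of care is checking that property (iii), stated for $w\in\mathcal{P}_{k-1}(I_n)$, covers the case $k=0$ correctly—there $\mathcal{P}_{-1}(I_n)=\{0\}$, so the orthogonality constraint is vacuous, and the cancellation above still holds because $\frac{d}{dt}v\equiv 0$ for constant $v$.
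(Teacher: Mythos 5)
Your proof is correct and is essentially the paper's argument run in reverse: where the paper starts from the DG weak formulation, integrates by parts, and rewrites the stabilization term $(\widehat{u}^{\,{\rm DG}}_h-u^{{\rm DG}}_h)\,v|_{t_{n-1}}^{t_n}$ as a volume integral via properties (i)--(iii) of $\mathcal{R}_{n,k+1}$, you start from the claimed identity, integrate $u^{{\rm DG},*}_h$ by parts, and let the same orthogonality property (iii) perform the decisive cancellation of $\int_{I_n}\mathcal{R}_{n,k+1}\,\frac{d}{dt}v$. Your explicit verification of the continuity condition and your remark about the vacuous case $k=0$ are details the paper leaves implicit, but the underlying mechanism is identical.
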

{ Note that the new formulation involves a {\it post-processing} of the DG solution, $u_h^{\rm DG,*}$, which incorporates  the DG approximation itself $u_h^{\rm DG}$ {\it as well as} its numerical trace $\widehat{u}^{\,{\rm DG}}_h$ into a single function. Taking this into account, we can say that this} result states that the only difference between the DG and the CG methods is how the right-hand side of the ODE is evaluated. {\bf Both discretizations of the derivative are identical.}  In particular, when $f$ depends only on $t$, the approximations provided {by the CG, $u_h^{\rm CG}$, and DG, $u_h^{\rm DG}$, methods} {\em coincide}. A more general result is described in the following result.

\begin{corollary}
\label{corollary3.1}
Assume that the CG method uses polynomials of degree $k+1$ and that the DG method uses polynomials of degree $k$. Assume also that the mapping $t\mapsto f(t,u(t))$ is approximated by a Lagrange interpolation $\mathcal{I}_R f(\cdot,u(\cdot))$ at the zeros of the left-Radau polynomial
$\mathcal{R}_{n,k+1}$. Then,
\begin{itemize}
    \item [{\rm (1)}] $u_h^{{\rm DG},*}=u_h^{{\rm CG}}$ on $I_n$.
   \item [{\rm (2)}] The post-processing $u_h^{{\rm DG},*}$ converges with order  $k+2$ on $I_n$, when $k>0$.
      \item [{\rm (3)}]  The DG approximation $u^{{\rm DG}}_h$ {\em super-converges} with order $k+2$ at the $k+1$ zeros of the left-Radau polynomial $\mathcal{R}_{n,k+1}$. 
\end{itemize}
\end{corollary}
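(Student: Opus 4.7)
The plan is to establish (1) first, as a direct consequence of Theorem \ref{theorem3.1}, and then to derive (2) and (3) from (1) together with the classical error theory for the continuous Galerkin method.

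For (1), the key observation is that $u_h^{{\rm DG},*}$ and $u_h^{{\rm DG}}$ differ only by a multiple of $\mathcal{R}_{n,k+1}$, which vanishes at all $k+1$ of its zeros. Hence the Lagrange interpolants $\mathcal{I}_R f(\cdot,u_h^{{\rm DG}})$ and $\mathcal{I}_R f(\cdot,u_h^{{\rm DG},*})$ agree at the Radau nodes and, being polynomials of degree $k$, coincide on $I_n$. Substituting this identity into the right-hand side of Theorem \ref{theorem3.1} (with $f$ replaced by $\mathcal{I}_R f$) yields the weak equation
\[
\int_{I_n} v(t)\,\frac{d}{dt} u_h^{{\rm DG},*}(t)\,dt = \int_{I_n} \mathcal{I}_R f(t,u_h^{{\rm DG},*}(t))\,v(t)\,dt\quad\forall v\in \mathcal{P}_k(I_n).
\]
Combined with the continuity $u_h^{{\rm DG},*}(t_{n-1}^+)=u_h^{{\rm DG},*}(t_{n-1}^-)$ established in Proposition \ref{proposition3.1}(b) and the convention $u_h^{{\rm DG},*}(0^-):=u_0$, this is exactly the modified CG formulation. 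A straightforward induction on $n$, using uniqueness of the CG scheme (which holds for $f$ Lipschitz and $h$ small enough), delivers $u_h^{{\rm DG},*}=u_h^{{\rm CG}}$ on each $I_n$.

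For (2), I would invoke the classical $L^\infty$ error estimate of Hulme for the CG method with polynomials of degree $k+1$, namely that $\|u-u_h^{{\rm CG}}\|_{L^\infty(I_n)}$ is of order $h^{k+2}$ for $k>0$ and sufficiently smooth $u$. The additional perturbation introduced by replacing $f$ with $\mathcal{I}_R f$ in the formulation is harmless: since the $(k+1)$-node left-Radau quadrature is exact on $\mathcal{P}_{2k}$, and since both $\mathcal{I}_R f$ and the test function lie in $\mathcal{P}_k$, the resulting consistency error is of order $h^{2k+2}$ per element, which is negligible compared to $h^{k+2}$. The identification in (1) then transfers the $h^{k+2}$ bound from $u_h^{{\rm CG}}$ to $u_h^{{\rm DG},*}$.

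Statement (3) is then immediate: at each of the $k+1$ zeros $t_i$ of $\mathcal{R}_{n,k+1}$ the correction vanishes, so $u_h^{{\rm DG},*}(t_i)=u_h^{{\rm DG}}(t_i)$, and the bound $|u(t_i)-u_h^{{\rm DG}}(t_i)|$ of order $h^{k+2}$ follows, exceeding by one order the standard DG rate $h^{k+1}$. The main obstacle is the rigorous verification that the Radau-interpolation perturbation does not degrade the CG rate uniformly in $n$; this is routine thanks to the $\mathcal{P}_{2k}$-exactness of the quadrature, but must be carried out carefully together with a Grönwall-type argument to accumulate the local bounds into a global one.
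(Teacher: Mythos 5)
Your proposal is correct and follows essentially the same route as the paper: (1) from Theorem \ref{theorem3.1} plus the observation that the Radau interpolants of $f(\cdot,u_h^{{\rm DG}})$ and $f(\cdot,u_h^{{\rm DG},*})$ coincide, (2) from Hulme's error estimate for the (discrete) CG method, and (3) from the vanishing of the correction term at the zeros of $\mathcal{R}_{n,k+1}$. The paper's proof is terser --- it simply cites \cite[Theorem 2]{HulmeII72} for the quadrature-perturbed CG rate rather than re-verifying the $\mathcal{P}_{2k}$-exactness and Gr\"onwall accumulation you sketch --- but the substance is identical.
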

\begin{proof} Property (1) follows from Theorem \ref{theorem3.1} and the fact that
{\small $\mathcal{I}_R f(\cdot,u_h^{{\rm DG}}(\cdot))$ conicides with $
\mathcal{I}_R f(\cdot,u_h^{{\rm DG},*}(\cdot))$}. Property (2) follows from Property (1) and  Hulme's result \cite[Theorem 2]{HulmeII72}. Finally, Property (3) follows from (1), (2) and the fact that, by construction,
$u_h^{{\rm DG}}(t_\ell)=u_h^{{\rm DG},*}(t_\ell)$
at the zeros $t_\ell$ of the left-Radau polynomial $\mathcal{R}_{n,k+1}$. This completes the proof.
\end{proof}

{\bf Proof of the main result}.
 We end this section with a proof of our main result.
To do that, we follow \cite{Cockburn23} and, roughly speaking, transform the stabilization term of the DG method into a space. We proceed as follows. First, we integrate by parts to get, for any $v \in \mathcal{P}_k(I_n)$, that
\begin{alignat*}{2}
\int_{I_n}  v(t)\;\frac{d}{dt} u^{{\rm DG}}_h(t)\;dt + S_h(v)=
\int_{I_n}  f(t, u^{\,{\rm DG}}_h(t)) \,v(t)\,dt,
\end{alignat*}
where,
\begin{alignat*}{2}
 S_h(v)&:=(\widehat{u}^{\,{\rm DG}}_h-u^{{\rm DG}}_h) v |_{t_{n-1}}^{t_n}
=-(\widehat{u}^{\,{\rm DG}}_h-u^{{\rm DG}}_h)(t^+_{n-1})\; v(t_{n-1}^+),
 \end{alignat*}
 is the so-called {\em stabilization} term.
 Now, we transform it as follows:
 \begin{alignat*}{2}
 S_h(v)&=-(\widehat{u}^{\,{\rm DG}}_h-u^{{\rm DG}}_h)(t^+_{n-1})\; v(t_{n-1}^+)
 \\
 &=(\widehat{u}^{\,{\rm DG}}_h-u^{{\rm DG}}_h)(t^+_{n-1})\; \big (0\cdot v(t_n^-)-1\cdot v(t_{n-1}^+)\big )
 \\
 &=(\widehat{u}^{\,{\rm DG}}_h-u^{{\rm DG}}_h)(t^+_{n-1})\; \big (\mathcal{R}_{n,k+1}\,v|_{t_{n-1}}^{t_n}\big )&&\quad\text{ by (i) and (ii)},
 \\
 &=(\widehat{u}^{\,{\rm DG}}_h-u^{{\rm DG}}_h)(t^+_{n-1})\; \int_{I_n} \frac{d}{dt}\big(\mathcal{R}_{n,k+1}\,v\big)(t)\; dt
 \\
 &=(\widehat{u}^{\,{\rm DG}}_h-u^{{\rm DG}}_h)(t^+_{n-1}) \; \int_{I_n}  v(t)\;\frac{d}{dt}\mathcal{R}_{n,k+1}(t) \;dt,
\end{alignat*}
by Property (iii) of the polynomial $\mathcal{R}_{n,k+1}$. This completes the proof.

\section{Ongoing and future work}
\label{sec:ongoingwork}
We have started to extend these results to the linear transport equation,
a necessary stepping stone towards extending them to the compressible 
Navier-Stokes equations in three-space dimensions, see \cite{LalCandlerCockburn23}.

\section{Epilogue}
The times in which I worked with Chi-Wang have been the most exciting academic moments I experienced. When we get 
together, we usually refer to them as the {\it good old times}. 
My only regret, however, is that I never convinced Chi-Wang that the best Chinese food is the Peruvian Chinese food, as it is an absolutely delicious fusion of Cantonese and Peruvian cuisines. I grew up in Per\'u, where Chinese restaurants have a special name, namely, CHIFA, which is the Spanish version of Ch\={\i} M\u{\i} F\`an, \begin{CJK*}{UTF8}{gbsn}
吃米饭
\end{CJK*} ,
as Chi-Wang himself discovered. If we were in Per\'u, I would bring Chi-Wang to a CHIFA to celebrate his birthday: {\it Happy 65-th birthday, Chi-Wang!}


\section*{Acknowledements} I would like to thank Sigal Gottlieb for the invitation to contribute to this special issue, to Yanlai Chen for his cool feedback and his sharp proofreading, and to Shukai Du for providing the Chinese characters for {\it eating}. {I  will also like to thank two anonymous referees for their careful reading of the paper.} 


\end{document}